\documentclass[a4paper,reqno]{amsart}
\numberwithin{equation}{section}

\newtheorem{theorem}{Theorem}[section]

\newtheorem{lemma}[theorem]{Lemma}

\newtheorem{corollary}[theorem]{Corollary}

\def\beq{\begin{equation}}
\def\eeq{\end{equation}}
\def\be{\begin{equation*}}
\def\ee{\end{equation*}}

\usepackage{amssymb}
\usepackage{amsfonts}

\title{Dedekind-finite Cardinals having countable partitions}
\author{Supakun Panasawatwong and J K Truss} 
\date{This paper is an amplified version of part of the first author's PhD thesis at the University of Leeds}

\begin{document}

\begin{abstract} We study the possible structures which can be carried by sets which have no countable subset, 
but which fail to be `surjectively Dedekind finite', in two possible 
senses, that there is surjection to $\omega$, or alternatively, that there is a surjection to a proper superset.
\end{abstract}

\maketitle  

\setcounter{footnote}{1}\footnotetext{2010 Mathematics Subject Classification: 03E25; \\
keywords: Dedekind finite, weakly Dedekind finite, tree}
\newcounter{number}

\section {Introduction}

In \cite{Panasawatwong} a study was made of a long list of possible definitions 
of `finiteness', following on from earlier work on this, for instance in \cite{Levy},
\cite{Truss1}, \cite{Truss3}, \cite{Degen}, \cite{Goldstern}. A set is said to be {\em Dedekind finite} 
if it has no countably infinite subset, or equivalently, there is no 
bijection to a proper subset. This notion is vacuous in the presence of the axiom of 
choice, AC, where this is just the same a saying that the set is finite 
(i.e. has cardinality in $\omega$), so except when we want to construct 
certain models, we shall not assume AC.

The aim in \cite{Panasawatwong} was to try to unify all the approaches mentioned in a systematic way, from the 
point of view of inclusions between the classes so defined, and their closure under the 
most natural operations such as unions and products. It was found that there was a natural 
subdivision, into those which admit no surjection to $\omega$, called in \cite{Walczak} 
`weakly Dedekind finite', and those which have no countable subset, but do admit a surjection
to $\omega$. Cardinalities of sets for which there is no countable subset (`Dedekind finite')
were written in \cite{Truss1} as $\Delta$, and those of sets for which there is no surjection 
to $\omega$ as $\Delta_4$. Another class was introduced there, written $\Delta_5$, comprising
the cardinalities of sets having no surjection to a proper superset. The analogy between the two classes
can be expressed by saying that for $|X|$ to lie in $\Delta$, any injection from $X$ to $X$ must 
also be surjective, whereas for it to lie in $\Delta_5$, any surjection from $X$ to $X$ must be 
injective. The most stringent notion (apart from actual finiteness) is called being `amorphous',
which we mention in some places. A set is said to be {\em amorphous} if it is infinite, but cannot be
written as the disjoint union of two infinite sets. Such a set can actually carry quite a lot of
structure (despite the name), and a detailed study was carried out in \cite{Truss3}.

Some tree structures which arise naturally when considering weak versions of the axiom of choice related
to K\"onig's Infinity Lemma were studied in \cite{Truss2} and \cite{Forster}. The trees in question are
`balanced' trees of height $\omega$, meaning that they have $\omega$ levels, and on each level the 
ramification order is constant (in this case, finite). For such a tree there is a natural surjection
to itself, obtained by mapping each non-minimal node to its predecessor, clearly surjective but not 
injective, so its cardinality cannot lie in $\Delta_5$. In a suitable model, it will however lie in 
$\Delta$. 

The main focus of this paper is therefore on trees which may or may not arise on a Dedekind finite set.
We begin by looking at cardinals in $\Delta_5 \setminus \Delta_4$, where trees as just mentioned do not 
arise. However, other types of trees, called `weakly 2-transitive' may, and this is a good source of
examples. We study members of $\Delta_5 \setminus \Delta_4$, some of which can be written as a 
countable union of weakly Dedekind finite sets, and others, more typically, cannot. We give 
$2^{\aleph_0}$ inequivalent examples, in a sense of `equivalence' introduced in \cite{Truss4}.

The other main case examined is of cardinals in $\Delta \setminus \Delta_5$, which is done in
terms of trees of height $\omega$, which, it is shown, must arise in this situation. We show that under
natural hypotheses, any such tree has a balanced subtree, and we extend the work of \cite{Forster} in 
the finitely branching case, and give some generalizations for infinitely branching trees.

\section {Preliminaries}

The main focus of the paper will be on the relationship between tree structures that a set can carry, and notions 
of Dedekind-finiteness. Here by a {\em tree} we understand a partially ordered set $(T, \le)$ in which any two 
elements have a common lower bound, and for every $x \in T$, $\{y \in T: y \le x\}$ is linearly ordered (under the 
restriction of $\le$). We consider two main cases, the first being densely ordered, as in \cite{Droste1} and 
\cite{Droste2}, appropriate for studying members of $\Delta_5 \setminus \Delta_4$ (sometimes $\Delta_4$), and 
the other well-founded, which arises when we wish to study members of $\Delta \setminus \Delta_5$.  In the 
latter case there is a unique least element (the `root') and for each $x \in T$, $\{y \in T: y \le x\}$ is 
well-ordered. For any such tree there are `levels' $L_\alpha$ given recursively by $L_\alpha$ is the set of minimal elements of 
$T \setminus \bigcup_{\beta < \alpha}L_\beta$ (which implies that $L_0$ just consists of the root). A maximal 
element of called a `leaf'. Since $T$ is assumed to be a set, there is a least ordinal for which $L_\alpha = \emptyset$, 
which is the {\em height} of the tree. For us, well-founded trees will always have height $\omega$, since any 
greater height would violate Dedekind-finiteness.

\begin{lemma} For any set $X$, $|X| \not \in \Delta_5$ if and only if there is a subset $T$ of $X$ which carries a 
well-founded tree structure of height $\omega$ and no leaves. \label{2.1}  \end{lemma}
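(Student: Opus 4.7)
The plan is to use the equivalence, already noted in the introduction, between $|X| \notin \Delta_5$ and the existence of a surjection $f : X \to X$ which fails to be injective.

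For the backward direction, suppose $T \subseteq X$ carries such a tree with root $r$. I would define $f : X \to X$ by fixing every point of $X \setminus T$, fixing $r$, and sending every non-root $t \in T$ to its unique immediate predecessor in $T$ (well-defined because $\{y : y \le t\}$ is well-ordered and finite in a well-founded tree of height $\omega$). The no-leaves hypothesis guarantees that every element of $T$ has at least one child, so the predecessor map is onto $T$, making $f$ surjective. Non-injectivity is automatic: any child $z$ of $r$ gives $f(z) = r = f(r)$ with $z \ne r$. Hence $|X| \notin \Delta_5$.

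For the forward direction, take a non-injective surjection $f : X \to X$. The first step is to produce $x_0 \in X$ with $f^n(x_0) \ne x_0$ for every $n \ge 1$: if every point of $X$ were $f$-periodic, then $X$ would partition into finite $f$-cycles with $f$ acting as a cyclic permutation on each, so $f$ would be a bijection of $X$, contradicting non-injectivity. With $x_0$ fixed, set $L_n := f^{-n}(x_0)$, let $T := \bigcup_{n \ge 0} L_n$, and order $T$ by $y \le z$ iff $y = f^k(z)$ for some $k \ge 0$. Non-periodicity makes the $L_n$ pairwise disjoint, since $y \in L_n \cap L_m$ with $n < m$ would force $f^{m-n}(x_0) = x_0$; surjectivity of $f$ keeps each $L_n$ non-empty and gives every $y \in L_n$ a preimage in $L_{n+1}$. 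It is then routine to check that $(T, \le)$ is a well-founded tree in which the $L_n$ coincide with the level sets from the recursive definition, so the height is $\omega$ and there are no leaves.

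The only delicate point is the production of a non-periodic $x_0$, and this is precisely where the non-injectivity of $f$ is used; once $x_0$ is fixed the construction is canonical and requires no further choice.
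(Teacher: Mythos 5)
Your argument is correct, and both directions are sound: the backward direction is exactly the paper's (fix everything outside $T$ and the root, send each non-root node to its immediate predecessor, which exists since chains below a node of a height-$\omega$ tree are finite). In the forward direction you construct the same object as the paper — the tree whose levels are the iterated preimages of a distinguished point, ordered by ``is an $f$-iterate of'' — but you secure the disjointness of the levels by a different device. The paper takes $x \neq y$ with $f(x) = f(y)$ and passes to the restriction $g$ of $f$ to $X \setminus \{x\}$, which is still surjective onto $X$; since $x \notin \mathrm{dom}(g)$, no iterate of $g$ can ever pass through $x$ again, so the sets $g^{-n}(x)$ are automatically disjoint with no case analysis. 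You instead keep the full $f$ and first extract a non-periodic point $x_0$, which exists because if every point were periodic then $X$ would decompose into $f$-cycles and $f$ would be a bijection; disjointness of the $f^{-n}(x_0)$ then follows from non-periodicity. Both devices are choice-free (locating $x_0$ is an existence claim, not a selection), and the remaining verifications (antisymmetry of $\le$ on $T$, identification of the $L_n$ with the recursively defined levels, absence of leaves from surjectivity) are routine in either setting. The paper's restriction trick is marginally slicker in that it needs no periodicity lemma; yours makes the mechanism preventing the orbit from returning more explicit. There is no gap.
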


\begin{proof}  Suppose $|X| \not \in \Delta_5$. Then there is a surjection $f$ from $X$ to $X$ which is not injective. Let $f$ map
distinct $x$ and $y$ to the same point. Then the restriction $g$ of $f$ to $X \setminus \{x\}$ is a surjection from a proper
subset of $X$ to $X$. Let $L_n = g^{-n}(x)$ for $n \in \omega$, and let $T = \bigcup_{n \in \omega}L_n$. Since $x \not \in {\rm dom}(g)$,
members of $L_n$ are those $a$ such that $n$ is least such that $g^n(a) = x$, and it follows that the $L_n$ are pairwise disjoint. We
let $a \le b$ if $g^n(b) = a$ for some $n$, and since $g^{-1}(a)$ is nonempty, there are no leaves. Clearly $L_n$ is the $n$th level of
the tree, so $T$ has $\omega$ levels.

Conversely, if a $T$ as stated exists, then this gives rise to a surjective but not injective function from $X$ to $X$ by mapping each
element of $T$ which is not the root to the immediately preceding element (which exists because it lies in a successor level, so there is a 
unique greatest point below it) and fixing all other points (including the root).   \end{proof}

There is a related but weaker notion, which is that $|X| \in \Delta_5^*$ if there is no finite-to-one surjection from $X$ to $X$
which is not injective. By adapting the above proof, the following results.

\begin{lemma} For any set $X$, $|X| \not \in \Delta_5^*$ if and only if there is a subset $T$ of $X$ which carries a tree structure 
of height $\omega$, finite levels, and no leaves. \label{2.2}  \end{lemma}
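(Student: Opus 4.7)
The plan is to adapt the proof of Lemma \ref{2.1} essentially verbatim, with the additional bookkeeping needed to match the finite-to-one hypothesis on the surjection with the finiteness of the levels of the tree.

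For the forward direction, suppose $|X| \notin \Delta_5^*$, so there is a finite-to-one surjection $f \colon X \to X$ that is not injective. Pick distinct $x, y$ with $f(x) = f(y)$, set $g = f \restriction (X \setminus \{x\})$, and define $L_n = g^{-n}(x)$ and $T = \bigcup_{n \in \omega} L_n$ exactly as in Lemma \ref{2.1}. The pairwise disjointness of the $L_n$, the nonemptiness of each $L_n$ (using that $g$ is a surjection onto $X$), and the tree order $a \le b \Leftrightarrow g^n(b) = a$ with no leaves all go through without change. The one new point is that each $L_n$ is finite: since $f$ is finite-to-one, so is $g$, and I would argue by induction, noting that $L_0 = \{x\}$ and $L_n = g^{-1}(L_{n-1})$ for $n \ge 1$, which is a finite union of finite sets.

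For the converse, given a subset $T \subseteq X$ carrying a tree structure of height $\omega$ with finite levels and no leaves, define $f \colon X \to X$ exactly as in Lemma \ref{2.1}: send every non-root element of $T$ to its immediate predecessor, and fix every other element of $X$ (including the root and every element of $X \setminus T$). Surjectivity and failure of injectivity follow as before from the absence of leaves. To check that $f$ is finite-to-one, I would observe that for $a \in X \setminus T$ we have $f^{-1}(a) = \{a\}$, while for $a \in T \cap L_n$ the preimage $f^{-1}(a)$ consists of the immediate successors of $a$ in $T$ (plus $a$ itself if $a$ is the root), and these immediate successors form a subset of $L_{n+1}$, which is finite by hypothesis.

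I do not anticipate a real obstacle here, as the argument is a direct modification of Lemma \ref{2.1}. The only substantive verification is the tight correspondence between the finite-to-one condition on $g$ and the finiteness of the levels $L_n$, which in both directions reduces to the observation that $g$-preimages correspond to immediate successors in the tree $T$.
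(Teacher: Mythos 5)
Your proof is correct and is exactly the adaptation the paper has in mind: the paper gives no separate proof of Lemma \ref{2.2}, stating only that it follows ``by adapting the above proof,'' and your two additions (levels are finite by induction since $g$ is finite-to-one, and preimages under the induced map are finite because they consist of immediate successors within a single finite level) are precisely the required bookkeeping.
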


\begin{corollary} $\Delta_5 \subseteq \Delta_5^*$. \label{2.3}  \end{corollary}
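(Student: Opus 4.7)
The plan is simply to unwind the two definitions and observe that the condition defining $\Delta_5$ is stronger than that defining $\Delta_5^*$. I would argue by contrapositive: suppose $|X| \notin \Delta_5^*$, so by definition there is a finite-to-one surjection $f : X \to X$ that fails to be injective. Since every finite-to-one surjection is in particular a surjection, the same $f$ is a non-injective surjection from $X$ to itself, and this witnesses $|X| \notin \Delta_5$. Contrapositively, $|X| \in \Delta_5$ forces $|X| \in \Delta_5^*$, which is the claimed inclusion.

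If one prefers to route the argument through the tree characterisations rather than directly through the definitions, the same observation works. From Lemma \ref{2.2}, $|X| \notin \Delta_5^*$ yields a subset $T \subseteq X$ carrying a tree structure of height $\omega$, finite levels, and no leaves. Finiteness of the levels is an extra constraint on top of being well-founded of height $\omega$, so $T$ automatically satisfies the hypothesis of Lemma \ref{2.1}, giving $|X| \notin \Delta_5$.

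There is no real obstacle: the content of the corollary is only the observation that finite-to-one surjections form a subclass of all surjections, so demanding injectivity of every surjection is at least as strong as demanding it only of the finite-to-one ones. The point of recording the corollary seems to be to place $\Delta_5^*$ correctly in the hierarchy before it is used later, rather than to prove anything substantial.
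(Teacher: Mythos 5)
Your proposal is correct and matches the paper exactly: the paper's own proof of Corollary \ref{2.3} simply says the inclusion is immediate from the definitions, or alternatively follows from Lemmas \ref{2.1} and \ref{2.2}, which are precisely the two routes you give. Both your direct contrapositive (a finite-to-one non-injective surjection is in particular a non-injective surjection) and your tree-based alternative are sound.
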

\begin{proof}  This is immediate from the definitions, or else one can use Lemmas \ref{2.1} and \ref{2.2}.  \end{proof}

\section{Constructions of cardinals in $\Delta_5 \setminus \Delta_4$}

We briefly review the Fraenkel--Mostowski method, which we shall use, and which presents fewer technical complications
than forcing. We work in FMC, which is the same as ordinary ZFC set theory, except that 
we allow a set $U$ of `atoms' (`urelemente'), being objects which have no elements, but are different from the empty set. 
This can be axiomatized by replacing the axiom of extensionality by an axiom which says that no member of $U$ has 
any elements, and sets not lying in $U$ having the same members are equal. One starts with a ground model
$\mathfrak M$ containing an interpretation for $U$, together with a group $G$ of permutations of $U$, and a `normal'
filter $\mathfrak F$ of subgroups of $G$ (meaning that it is closed under conjugacy), all lying in $\mathfrak M$. We allow 
$G$ to act on the whole of $\mathfrak M$ by defining $g(x) = \{g(y): y \in x\}$, and then the standard notions of 
pointwise and setwise stabilizers $G_x$ and $G_{\{x\}}$ of $x \in {\mathfrak M}$ make sense. Thus $G_x$ is the
set of elements $g$ of $G$ such that $g(y) = y$ for every $y \in x$, and $G_{\{x\}}$ is the set of elements $g$ of $G$ 
such that $g(x) = x$. The Fraenkel--Mostowski model thereby defined is 
${\mathfrak N} = \{x \in {\mathfrak M}: G_{\{x\}} \in {\mathfrak F} \wedge x \subseteq {\mathfrak N}\}$ (which is a valid 
definition by transfinite induction). Normality of $\mathfrak F$ is required so that the axiom of
replacement holds in the model. Usually it is assumed that $G_{\{u\}} \in {\mathfrak F}$ for each $u \in U$, in which 
case $U \in {\mathfrak N}$. We say that $\mathfrak F$ is `generated by finite supports' if it is the family of
all subgroups of $G$ containing the pointwise stabilizer of a finite subset of $U$. This is automatically normal,
since $gG_Ag^{-1} = G_{gA}$. If $G_{\{x\}} \ge G_A$, we say that $x$ is {\em supported by} $A$.

It can be checked that $\mathfrak N$ satisfies all the axioms of FMC, except the axiom
of choice, AC. This provides a conceptually relatively straightforward method for producing models in which AC
is false, and the method predates forcing. Cohen showed how to adapt the main ideas to give models of ZF in which
choice is false, and the Jech--Sochor Theorem \cite{Jech} provides general circumstances in which consistency results achieved
using FM models can be automatically transferred to forcing proofs.

We now use the Fraenkel--Mostowski method to present various ways in which cardinals in $\Delta_5 \setminus \Delta_4$ 
can be constructed. The first family of examples provides the desired set rather directly, with the surjection to $\omega$
included explicitly. The most well-known model of this type gives Russell's `pairs of socks', meaning that there is a
countable sequence of pairs without a choice function. Thus we let the set of atoms in the ground model $\mathfrak M$
be $U = \{u_{ni}: n \in \omega, i \in 2\}$. For ease we write $U_n = \{u_{n0}, u_{n1}\}$, and the group $G$ used to 
define the model is taken to be the set of permutations of $U$ fixing each $U_n$. The filter $\mathfrak F$ of subgroups of
$G$ is taken to be generated by finite supports, which is automatically normal,
and hence gives rise to a model $\mathfrak N$ of FM. Clearly the partition into the sets $U_n$ persists in the model. To
see that $|U| \in \Delta_5$ in $\mathfrak N$, suppose that $f \in {\mathfrak N}$ is a surjection from $U$ to 
$U \cup \{0\}$. Then $f$ must be supported by $\bigcup_{k < N}U_k$ for some $N$. For $i \ge N$, $f^{-1}U_i$ must be 
non-empty. If $u_{mj} \in f^{-1}U_i \setminus U_i$ then we can interchange the two members of $U_i$ leaving $u_{mj}$
and all members of $\bigcup_{k < N}U_k$ fixed, contrary to $f$ a function supported by $\bigcup_{k < N}U_k$. Hence
$f^{-1}U_i \subseteq U_i$. It follows that $f$ maps $\bigcup_{k < N}U_k$ onto a set containing 
$\bigcup_{k < N}U_k \cup \{0\}$, which is impossible for a finite set.  

We can generalize this example to the following case. Let ${\mathcal A}_n$ be non-trivial $\aleph_0$-categorical structures 
having pairwise disjoint domains, and assume that they are all transitive, meaning that their automorphism groups $G_n$
act transitively. Then we just copy the above construction, taking $U_n$ to be indexed by the members of ${\mathcal A}_n$. 
The automorphism group $G$ of the resulting set $U$ of atoms is taken to be the (unrestricted) direct product 
of the $G_n$. By the results of \cite{Walczak}, in the resulting model $\mathfrak N$, each $U_n$ has cardinality in
$\Delta_4$. The fact that $|U| \in \Delta_5$ follows by the same argument as for `pairs of socks'. For suppose that $f$ 
is a map from $U$ onto $U \cup \{0\}$, having finite support contained in $\bigcup_{k < N}U_k$. If for some $i \ge N$,
there is $x \in f^{-1}U_i \setminus U_i$, then again by non-triviality the member $f(x)$ of $U_i$ can be moved by a member 
of $G$ while keeping fixed all members of $\{x\} \cup \bigcup_{k < N}U_k$, contrary to $f$ a function supported by 
$\bigcup_{k < N}U_k$. Hence for each $i \ge N$, $f^{-1}U_i \subseteq U_i$, and so $f$ maps $\bigcup_{k < N}U_k$ onto 
a set containing the proper superset $\bigcup_{k < N}U_k \cup \{0\}$. However, $\Delta_4$ is closed under forming 
finite unions, so this gives a contradiction.

We remark that actually what is required to make the above argument work is that each $G_n$ acts transitively on $U_n$,
and $|U_n| \in \Delta_5$. Requiring $U_n$ to arise from an $\aleph_0$-categorical structure corresponds to 
$|U_n| \in \Delta_4$, so that is a stronger hypothesis than necessary, though easier to describe. For instance, we can
take each $U_n$ to arise from a weakly 2-transitive tree, as described below, and although these need not be 
$\aleph_0$-categorical, the argument to show that $|U| \in \Delta_5$ still goes through.

Note that if the requirement of transitivity is dropped, then $\bigcup_{n \in \omega}U_n$ may even fail to lie in
$\Delta$. For instance, if every ${\mathfrak A}_n$ is a copy of the closed rational interval $[0, 1]$ then the set of
left endpoints forms a countable subset of $U$.

Since $|U| \not \in \Delta_4$, by \cite{Walczak}, the corresponding structure is not $\aleph_0$-categorical, so it must
have non-isomorphic but elementarily equivalent models. What these are will depend on the precise choice of language
used to describe the structure. For instance if we axiomatize Russell's pairs of socks using unary predicates 
${\underline P}_n$ to stand for $\{u_{n0}, u_{n1}\}$, then any model is determined up to isomorphism by the 
cardinality of the set of realizations of the non-principal type $\{\neg {\underline P}_n: n \in \omega\}$, so there
are many options. These are not of particular interest however from the point of view of models for Dedekind-finite
cardinals. If we instead try axiomatizing the pairs of socks by means of a partial order in which
$u_{mi} < u_{nj} \Leftrightarrow m < n$, then in non-standard models any `infinite' point lies in a copy of 
${\mathbb Z} \times \{0, 1\}$, so these models have a very different character. The moral of these examples is that
ordinary first order logic is not the right way to describe non-weakly Dedekind finite sets, and a suitably
strengthened language should be used instead.

We now demonstrate by means of some examples, that there are many sets having cardinality in $\Delta_5$ which
cannot be written as a countable union of weakly Dedekind finite sets, so the situation so far described is
very much atypical. For a start, we can vary the pairs of socks example by having a longer transfinite sequence 
of pairs. In the resulting model, $U$ has not only countably infinite partitions, but it may also have well-ordered 
partitions of greater cardinality. In the simplest case, we can let $U = \{u_{\alpha i}: \alpha < \omega_1, i < 2\}$, 
$G$ be the group of all permutations of $U$ fixing each $U_\alpha = \{u_{\alpha 0}, u_{\alpha 1}\}$ (setwise), and 
$\mathfrak F$ be generated by finite supports. The proof that in the resulting FM model, $|U| \in \Delta_5$ is a
slightly simpler version of that for the next model. To see that $U$ cannot be written as a countable union of 
weakly Dedekind-finite sets, observe that any weakly Dedekind-finite subset of $U$ must actually be finite, as
if it intersects infinitely many $U_\alpha$ then it can be mapped onto $\omega$, and of course $U$ is not a countable
union of finite sets.

A superficially similar example, which is however radically different, is as follows. Let $U$ and $U_\alpha$ be 
as in the previous example, but this time we take the group $G$ of all permutations of $U$ which preserve the
partition $\Pi = \{U_\alpha: \alpha < \omega_1\}$, and for the filter $\mathfrak F$ of subgroups we take the
family of all subgroups containing the pointwise stabilizer $G_{A \cup B}$ of sets $A \cup B$ for which $A$ is a
finite subset of $U$, and $B$ is a countable subset of $\Pi$. This is a normal filter since 
$gG_{A \cup B}g^{-1} = G_{gA \cup gB}$. Each countable subset of $\Pi$ then lies in the resulting model
$\mathfrak N$, and so $|U| \not \in \Delta_4$. But we can see that $|U| \in \Delta_5$ as follows. Suppose
that $f$ is a surjection from $U$ to $U \cup \{0\}$, and let $G_{\{f\}} \ge G_{A \cup B}$. Let us write $X_n = f^{-n}(0)$.
Since $f$ is surjective, all the $X_n$ are disjoint and non-empty. Since $A$ is finite, there is $\alpha$ such that
$u_{\alpha 0}, u_{\alpha 1} \not \in A$ and either one or both of $u_{\alpha 0}$, $u_{\alpha 1}$ lie in some $X_n$. First
treating the case in which they both lie in some $X_n$, let $u_{\alpha 0} \in X_m$ and $u_{\alpha 1} \in X_n$, and 
assume without loss of generality that $m \le n$. Let $\pi \in G$ interchange $u_{\alpha 0}$ and $u_{\alpha 1}$ and 
fix all other points. Then $\pi \in G_{A \cup B}$, so $\pi(f) = f$. Since $f$ is surjective, there is 
$u_{\beta i}$ mapped by $f$ to $u_{\alpha 1}$, and $u_{\beta i} \in X_{n+1}$, so 
$u_{\beta i} \neq u_{\alpha 0}, u_{\alpha 1}$. Therefore $\pi$ fixes $u_{\beta i}$. Now 
$\langle u_{\beta i}, u_{\alpha 1} \rangle \in f$, so as $\pi$ fixes $f$, 
$\langle u_{\beta i}, u_{\alpha 0} \rangle \in f$ too. But this contradicts $f$ a function. If just one of 
$u_{\alpha 0}$, $u_{\alpha 1}$ lies in some $X_n$, suppose for example that it is $u_{\alpha 0}$. Consider
$\pi$ swapping $u_{\alpha 0}$ and $u_{\alpha 1}$ and fixing all other points. If 
$f(u_{\beta i}) = u_{\alpha 0}$ then $u_{\beta i} \not \in U_\alpha$, so it is fixed by $\pi$, and as in the first
case we contradict $f$ a function. As in the previous
example, $U$ is not a countable union of weakly Dedekind-finite sets. We remark that the partition $\Pi$ is, in
$\mathfrak N$, a `quasi-amorphous' set, in the sense defined in \cite{Creed}. This means that it is uncountable,
every infinite subset has a countable subset, but it cannot be written as the disjoint union of two uncountable sets.

More complicated examples arise from 2-transitive or weakly 2-transitive trees, as defined in \cite{Droste1} and 
\cite{Droste2}. We recall the basic definitions. A tree is said to be {\em 2-transitive} if for any two 
isomorphic 2-element substructures there is an automorphism which takes the first to the second (which is not
required to extend the given isomorphism). It is {\em weakly 2-transitive} if for any two 2-element chains
there is an automorphism taking the first to the second. (The difference therefore is that the automorphism
group of a weakly 2-transitive tree is not required to act transitively on 2-element antichains.) Now these 
two differ in that countable 2-transitive trees are $\aleph_0$-categorical, but ones which
are weakly 2-transitive need not be. In fact in the most typical cases, a weakly 2-transitive tree exhibits 
infinitely many distinct ramification orders, so is definitely not $\aleph_0$-categorical. We assume non-triviality, 
namely that all maximal chains are isomorphic to $\mathbb Q$, and that there are incomparable points.

In order to explain `ramification', we require a tree, written $T^+$, which is obtained from $T$ by adjoining
extra points so that any two members of $T$ have a greatest lower bound (meet) in $T^+$. A construction for 
this is given for example in \cite{Droste1}. It can then be checked that any two members of $T^+$ also have a 
meet in $T^+$, so $T^+$ is the least lower semilattice extending $T$, and all the `new' points of $T^+$ are of the 
form $a \wedge b$ for some $a, b \in T$. A {\em ramification point} is then a member of $T^+$ which is the meet of two 
incomparable members of $T$. At each ramification point $x$, there is a natural equivalence relation on points
above $x$ under which $y \sim z$ if for some $t > x$, $y, z \ge t$. The equivalence classes are called {\em cones}
at $x$, and the number of cones is called the {\em ramification order} of $x$. In the 2-transitive case, the 
ramification order of all ramification points is the same, but this is not true for weakly 2-transitive trees.

Some complications are caused by `special' ramification points, being those at least one of whose cones has a least
element. If this arises, then the maximal chains of $T^+$ will not be dense, but will contain consecutive pairs, of
which the upper point lies in $T$ but the lower does not. In particular we need to take account of what we call
{\em exceptional} ramification points, being those special ones which have just one cone having a least element. If 
a special ramification point has more than one cone with a least element, then by weak 2-transitivity these cones can
be interchanged. To make life easier in Theorem \ref{3.2} we shall assume that there are no special ramification 
points at all, but for our first result in this section, Theorem \ref{3.1} we do treat the general case.

If $A$ is a finite subset of a weakly 2-transitive tree then we write $[A]$ for the set of all members of $T^+$ fixed
by all elements of $G_A$. This is also called the {\em definable closure} of $A$. It may be explicitly described as 
the lower subsemilattice of $T^+$ generated by $A$, together with the least points of cones at any exceptional 
ramification points arising. This is also finite, and in fact is equal to $A \cup \{a \wedge b: a, b \in A\}$ together with 
the least points of cones at exceptional ramification points of the form $a \wedge b$ for $a, b \in A$. 

Now consider an FM-model induced by a weakly 2-transitive tree $T$. Let the set of atoms be $U = U_T = \{u_t: t \in T\}$.
Let $G$ be the group of automorphisms of $U$ induced by the automorphism group of $T$, and let ${\mathfrak N}_T$ be 
the corresponding FM-model with finite supports. We carry over the $[A]$ notation to this situation. We sometimes 
adjoin $-\infty$ below the tree, fixed by all automorphisms.

\begin{theorem}  \label{3.1} For any countable weakly $2$-transitive tree $T$, $|U_T| \in \Delta_5$ in ${\mathfrak N}_T$, 
and if $T$ has infinitely many ramification orders, then $|U_T| \not \in \Delta_4$.   \end{theorem}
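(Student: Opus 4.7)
The plan for the second assertion is to build an explicit surjection $\phi \colon U_T \to \omega$ in $\mathfrak{N}_T$. I will fix any $a \in T$ and set $\phi(u_t)$ equal to (a fixed encoding in $\omega$ of) the ramification order of the meet $a \wedge t$ in $T^+$ whenever $t$ is incomparable to $a$, and $\phi(u_t) = 0$ otherwise. Since automorphisms of $T$ preserve meets and ramification orders, $\phi$ will be $G_{\{u_a\}}$-invariant and hence lie in $\mathfrak{N}_T$ with finite support $\{u_a\}$. To show that infinitely many values are attained I will use the transitivity of $G$ on $T$ (a consequence of weak 2-transitivity applied to 2-chains): given any ramification point $r = b_1 \wedge b_2$ of $T^+$, choose $g \in G$ with $g(b_1) = a$; then $a \wedge g(b_2) = g(r)$ is a ramification point of the same order. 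Under the hypothesis that $T$ has infinitely many ramification orders, the image of $\phi$ is infinite, and composing with a bijection onto $\omega$ (a ground-model object belonging to $\mathfrak{N}_T$) produces the required surjection.

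For the first assertion I argue by contradiction, supposing $f \colon U_T \to U_T \cup \{0\}$ is a surjection in $\mathfrak{N}_T$ supported by a finite set $A \subseteq U_T$. Write $A^* = \{t : u_t \in A\}$ and $B = [A^*] \subseteq T^+$, a finite set. The fundamental observation is that $G_{[A^* \cup \{t\}]}$ fixes both $f$ and $u_t$, so
\[
 f(u_t) \in \{0\} \cup \{u_s : s \in [A^* \cup \{t\}] \cap T\}
\]
for every $t \in T$. When $T$ has no exceptional ramification points the meets $a \wedge t$ for $a \in B$ lie in $T^+ \setminus T$ (when incomparable) or already in $B$ (when comparable), so $[A^* \cup \{t\}] \cap T = (B \cap T) \cup \{t\}$ uniformly in $t$, giving $f(u_t) \in \{0, u_t\} \cup \{u_s : s \in B \cap T\}$ for every $t$.

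I will then consider the iterated preimages $Y_n := f^{-n}(0)$ for $n \ge 1$. Iterated surjectivity makes each $Y_n$ non-empty, and the $Y_n$ are pairwise disjoint. For $n \ge 2$ and $u_t \in Y_n$ with $t \notin B$, the value $f(u_t) \in Y_{n-1}$ cannot be $0$ (since $Y_n \cap Y_1 = \emptyset$) nor $u_t$ (which would give $f^n(u_t) = u_t \ne 0$), so $f(u_t) = u_s$ with $s \in B \cap T$ and $u_s \in Y_{n-1}$. Hence whenever $Y_n$ contains any atom outside $\{u_s : s \in B \cap T\}$, the previous level $Y_{n-1}$ must intersect this finite set. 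Since $|B \cap T|$ is finite, this can occur for only finitely many values of $n-1$, which forces $Y_n = \emptyset$ for all sufficiently large $n$, contradicting the non-emptiness of every $Y_n$.

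The main technical obstacle is the general case in which $T$ may have exceptional ramification points, so that $[A^* \cup \{t\}] \cap T$ can contain extra least-cone-point elements attached to $t$. These extras form a $G_{A^*}$-equivariant finite set for each $t$, and the plan is to iterate the preceding argument through them: each exceptional ramification point contributes only finitely many new $T$-elements, and this structure is hereditarily controlled by $B$, so after finitely many stages the same finiteness bound again yields the desired contradiction.
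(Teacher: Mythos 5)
Your argument for the second assertion is correct and is in fact more explicit than the paper's, which disposes of this clause in one line by citing that $T$ is not $\aleph_0$-categorical. Your map sending $u_t$ to the ramification order of $a\wedge t$ is indeed supported by $\{u_a\}$; transitivity of $G$ on $T$ does follow from weak $2$-transitivity (every point is the top of a $2$-chain, since maximal chains are copies of $\mathbb{Q}$), so every ramification order is realized at some $a\wedge t$, the image of $\phi$ is an infinite subset of $\omega$, and composing with its (ground-model, hence $\mathfrak{N}_T$-) increasing enumeration gives the required surjection onto $\omega$. This part stands on its own.

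The first assertion, however, has a genuine gap. Your key claim that, when there are no exceptional ramification points, $[A^*\cup\{t\}]\cap T=(B\cap T)\cup\{t\}$ ``uniformly in $t$'' is false in general: a ramification point $t\wedge a$ (for $a\in A^*$ incomparable with $t$) need not lie in $T^+\setminus T$, because the points of $T$ are themselves allowed to ramify even in trees with no special ramification points (this is exactly the extra datum recorded in the paper's ``code''). Such a point $t\wedge a$ lies in $[A^*\cup\{t\}]\cap T$ but is in general neither $t$ nor a member of $B$, so the containment $f(u_t)\in\{0,u_t\}\cup\{u_s:s\in B\cap T\}$ is unjustified, and your concluding pigeonhole argument collapses: the candidate images $u_{t\wedge a}$ range over the infinitely many points of $T$ lying below members of $B$, so $Y_{n-1}$ is not forced into a fixed finite set. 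The paper closes precisely this hole by a second appeal to surjectivity: with $y=f(x)$, $y\notin[A]$, it chooses $z$ with $f(z)=x$ and shows that if $y$ were of the form $x\wedge a$ (or a least cone point attached to such a meet), then either $x\le b$ for some $b\in[A]$, forcing $x\wedge a=b\wedge a\in[A]$, or $x\not\le b$ for all $b$, in which case the same analysis applied to the pair $(z,x)$ again yields an impossible identity of meets. The upshot is the stronger conclusion that $f$ fixes every point outside the finite set $[A]$, whence $f$ maps $[A]$ onto $[A]\cup\{*\}$, which is absurd. Your closing paragraph on exceptional ramification points is only a plan, and the iteration you sketch meets the same unboundedness obstacle; you need an argument of the paper's kind to show outright that $f(y)=y$ for all $y\notin[A]$.
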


\begin{proof} Let $f: U_T \to U_T \cup \{*\}$ where $* \not \in U_T$ be surjective in ${\mathfrak N}_T$ with finite 
support $A$.

We remark that for any $x, y$, if $f(x) = y$, then $y \in [A \cup \{x\}]$. For since $A$ supports $f$, $\pi(f) = f$ 
for all $\pi \in G_A$. Then $\langle x, y \rangle \in f$ and so $\langle \pi x, \pi y \rangle \in \pi(f) = f$. Hence if 
$\pi y \neq y$, then $\pi x \neq x$ since $f$ is a function. If $y \not \in [A \cup \{x\}]$, then by definition of 
definable closure $[ \;\;]$, there is $\pi \in G_{A \cup \{x\}} \subseteq G_A$ such that $\pi y \neq y$ but $\pi x = x$, 
contrary to what we have shown. Hence $y \in [A \cup \{x\}]$. Also note that $[A \cup \{x\}]$ is the union of 
$[A] \cup \{x\}$ and $\{x \wedge a: a \in A\}$, together with the minimal points of cones at exceptional ramification 
points in this set. 

Now consider $y \not \in [A]$ and let $x$ be such that $f(x) = y$. We aim to show that $x = y$. Now $x \in [A]$ would imply 
that $y \in [A]$, contrary to supposition, and we deduce that $x \not \in [A]$.

\noindent{\bf Case 1}: $y \not \le a$ for all $a \in [A]$.

Then $y \neq x \wedge a$ for all $a \in A$, and also $y$ is not the minimal point of a cone at an exceptional ramification 
point of $[A]$ (as this would lie in $[A]$). So if $y \neq x$, then $y$ must be the minimal point of a cone at an exceptional
ramification point of the form $x \wedge a$ for some $a \in A$. 

Let $z$ be such that $f(z) = x$. If $x \le b$ for some $b \in [A]$, then $x \wedge a = b \wedge a$, which is impossible.
Applying the same argument as above for $x$ and $z$, $x$ must be the minimal point of a cone at an exceptional ramification
point of the form $z \wedge b$ for some $b \in A$. But now again $x \wedge a = b \wedge a$ which is impossible.

The conclusion is that $x = y$.

\noindent{\bf Case 2}: $y \le a$ for some $a \in [A]$.

If $x \not \le b$ for all $b \in [A]$, then by the argument in Case 1, $f(x) = x$, and $x = y$ again follows. Hence $x \le b$
for some $b \in A$. Then for any $c \in A$, either $x \wedge c = x$ or $x \wedge c \in [A]$. Since $x \not \in [A]$, no new
ramification points are added in passing from $[A]$ to $[A \cup \{x\}]$, and therefore $[A \cup \{x\}] = [A] \cup \{x\}$. It 
follows that $x = y$. 

From both cases, we have $f(y) = y$ for all $y \not \in [A]$. Hence $f[A] = [A] \cup \{*\}$ which is a 
contradiction since $f$ is surjective but $[A]$ is finite. Therefore such $f$ does not exist
in ${\mathfrak N}_T$ and so $|U_T| \in \Delta_5$ in ${\mathfrak N}_T$.  

The final clause follows from the fact that $T$ is not $\aleph_0$-categorical.  \end{proof}

We are now able to deduce that there are many `essentially distinct' examples of this type. The sense in which
we mean essentially distinct was introduced in \cite{Truss4}, in terms of a notion called `equivalence'. We say 
that sets $X$ and $Y$ are {\em equivalent}, written $X \equiv Y$, if for any first-order structure in a countable 
language which has $X$ as its domain, there is an elementarily equivalent first-order structure having domain $Y$, 
and vice versa. In the presence of AC, any two infinite sets are equivalent (as follows from the L\"owenheim--Skolem 
Theorems), but without choice this may not be the case. The intuition is that a set may hide some structure, which is 
obscured in the presence of the ability to well-order everything. For instance, if $X$ is Dedekind-finite, then so 
is $Y$, as was remarked in \cite{Truss4}. 

Let us first see that if $|X| \in \Delta_5$ and $X \equiv Y$, then $|Y| \in \Delta_5$. For if not, there is a
surjective but not injective function $f: Y \to Y$. In a first order language containing a function symbol for $f$,
one can express `$f$ is a surjective but not injective function', and this is true in $(Y, f)$. Since $X \equiv Y$,
there must be an interpretation for the function symbol making the same statement true in $X$, contrary to 
$|X| \in \Delta_5$.

Next we can show that there are $2^{\aleph_0}$ inequivalent examples of sets in $\Delta_5 \setminus \Delta_4$ arising 
from weakly 2-transitive trees. It is easiest for this purpose to consider just those weakly 2-transitive trees
in which the maximal chains in $T^+$ are densely ordered. The main complication in the classification given in
\cite{Droste2} comes about when this is {\em not} the case, which happens if there are special ramification points,
ones having a cone with a least element. A general description of all countable weakly 2-transitive trees requires 
listing information about special ramification points. Since there are $2^{\aleph_0}$ cases in which there are no 
special ramification points, we shall just deal with the case in which there are none. To specify the countable 
weakly 2-transitive tree having no special ramification points up to isomorphism, it suffices to state which 
ramification orders arise, and whether or not the points of $T$ themselves ramify (with their ramification order 
also given). Thus we require an infinite subset of $\{2, 3, \ldots, \infty\}$, and a statement of the ramification 
order of points of $T$ (which will be 1 if they do not ramify, and some member of
$\{2, 3, \ldots, \infty\}$ otherwise). We call this list the {\em code} of $T$.

\begin{theorem}  \label{3.2} For any countable weakly $2$-transitive trees $T_1$, $T_2$ in which all maximal chains
of $T_1^+$, $T_2^+$ are dense, having distinct infinite codes, $|U_{T_1}|$ and $|U_{T_2}|$ are $\equiv$-inequivalent 
members of $\Delta_5 \setminus \Delta_4$.   \end{theorem}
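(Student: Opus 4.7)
The first clause is immediate from Theorem~\ref{3.1}: the infiniteness of each code forces infinitely many ramification orders, hence non-$\aleph_0$-categoricity of $T_i$, so $|U_{T_i}| \in \Delta_5 \setminus \Delta_4$ for $i=1,2$.

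For the $\equiv$-inequivalence, the plan is to equip each $U_{T_i}$ with the natural tree relation $\le_i$ transferred from $T_i$; this is $\mathrm{Aut}(T_i)$-invariant and hence lies in $\mathfrak{N}_{T_i}$. I aim to show that no structure on $U_{T_2}$ in $\mathfrak{N}_{T_2}$ is elementarily equivalent to $(U_{T_1},\le_1)$. The first step is to observe that the code of $T$ is first-order expressible in $(U_T,\le)$: the semilattice $T^+$ is interpretable in $T$ because two pairs of elements have the same meet in $T^+$ exactly when they have the same set of common lower bounds in $T$, so for each finite $n\ge 2$ there is a sentence $\sigma_n$ saying ``there exists a $T^+$-ramification point of order exactly $n$'', and a sentence $\tau_n$ saying ``$r_T=n$''. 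Hence $\mathrm{Th}(U_{T_i},\le_i)$ determines the finite values of the code $(R_i,r_{T_i})$, and the distinctness of the codes will eventually furnish a distinguishing sentence.

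Suppose now a structure $\mathcal{M}$ on $U_{T_2}$ in $\mathfrak{N}_{T_2}$ satisfies $\mathcal{M}\equiv(U_{T_1},\le_1)$. Then $\mathcal{M}$ has some finite support $A\subseteq U_{T_2}$, so its relations are $G_A$-invariant (where $G_A$ is the pointwise stabilizer of $A$ in $\mathrm{Aut}(T_2)$) and therefore first-order definable in $(T_2,\le_2)$ with $A$ as parameters. The main obstacle, which is the technical heart of the argument, is to derive a contradiction here: one must argue that any $G_A$-definable tree on $U_{T_2}$ has a code determined by $(R_2,r_{T_2})$ together with the finite contribution from $[A]$. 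Concretely, the orbit structure of $G_A$ on $U_{T_2}$, combined with the weak 2-transitivity of $T_2$, should force any $\mathcal{M}$-ramification point $x$ lying outside $[A]$ to have its cones drawn from the natural cones at the corresponding $T_2$-position, so the $\mathcal{M}$-order of $x$ must lie in $R_2$. Since the two distinct infinite codes disagree on some value $n$ beyond what the finite $A$ can absorb, we obtain $\sigma_n\in\mathrm{Th}(U_{T_1},\le_1)$ with $\mathcal{M}\not\models\sigma_n$ (or the analogous statement for $\tau_n$), contradicting elementary equivalence. This rules out $\mathcal{M}$ and establishes $U_{T_1}\not\equiv U_{T_2}$; the hard step, as indicated, is the rigorous orbit-and-definability analysis showing that no $G_A$-invariant re-interpretation on $U_{T_2}$ can smuggle in a ramification order that $T_2$ itself does not supply.
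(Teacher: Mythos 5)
Your overall strategy matches the paper's: fix a finite support $A$ for the hypothetical elementarily equivalent structure, exploit the action of the pointwise stabilizer $G_A$, and compare codes via first-order sentences. The first clause and the reduction to a $G_A$-invariant relation are fine. But the proposal has a genuine gap exactly where you say it does: the ``rigorous orbit-and-definability analysis'' that you defer is not a routine verification --- it \emph{is} the proof. The paper's argument spends essentially all of its length on this step. Concretely, one decomposes $T\setminus[A]$ into the pieces $U_a$ (points above a maximal $a\in[A]$) and $C_x$ (cones branching off the linear segments $L_{ab}$ between consecutive elements of $[A]$), and proves: (i) $<$-incomparable points of the same piece are swapped by an element of $G_A$ supported inside that piece, hence are $\prec$-incomparable; (ii) a single instance of $b<c$ with $c\prec b$ propagates by weak $2$-transitivity to a global reversal, which contradicts $\prec$ being a tree; (iii) hence on each piece $\prec$ is either identical to $<$ or empty; (iv) not every piece can be a $\prec$-antichain, since cross-piece comparabilities are killed by transitivity of $G_A$ and the resulting structure would have only finite chains, contradicting elementary equivalence with a tree with dense (hence infinite) chains; (v) once $\prec$ and $<$ agree on some $U_a$, one shows $b<x\Leftrightarrow b\prec x$ for \emph{all} $x$, for any $b\in U_a$. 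None of this is supplied in your writeup, and without it the claim that a $G_A$-invariant tree ``cannot smuggle in'' a foreign ramification order is an assertion, not an argument.

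Two further points. First, your inference that $G_A$-invariance makes the relations of $\mathcal M$ ``first-order definable in $(T_2,\le_2)$ with $A$ as parameters'' is unjustified: that implication holds for $\aleph_0$-categorical structures via Ryll-Nardzewski, but the trees here have infinitely many ramification orders and are explicitly \emph{not} $\aleph_0$-categorical (that is the whole point of Theorem~\ref{3.1}'s last clause), so you must work directly with invariance, as the paper does. Second, your endgame only yields one containment (every $\mathcal M$-ramification order outside $[A]$ lies in $R_2$), which handles a finite $n\in R_1\setminus R_2$ but not, without further work or symmetrization, the case $n\in R_2\setminus R_1$, nor differences at $\infty$ or in the ramification order $r_T$ of the tree points themselves. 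The paper's stronger conclusion --- that $\prec$ literally coincides with $<$ on an entire upward cone --- is what lets it read off that the two codes are \emph{equal} and so reach a clean contradiction.
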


\begin{proof}  The point of the restriction we have made on our weakly 2-transitive trees is that for any
ramification point $r$ and $x, y > r$ in $T$, there is an automorphism of $T$ fixing $r$ and taking $x$ to $y$.

Suppose for a contradiction that $|U_{T_1}| \equiv |U_{T_2}|$. In ${\mathfrak N}_{T_1}$ there must therefore be a 
structure $\prec$ on $U_{T_1}$ such that $(U_{T_1}, \prec)$ in ${\mathfrak N}_{T_1}$ is elementarily equivalent to 
$(U_{T_2}, <)$ in ${\mathfrak N}_{T_2}$. Let $A$ be a finite support for $(U_{T_1}, \prec)$ in ${\mathfrak N}_{T_1}$. 
Thus any automorphism of $(U_{T_1}, <)$ that fixes $A$ pointwise also fixes $\prec$. We may assume that $A$ is a 
subtree of $U_{T_1}$.

In order to handle this situation, we use the following notation, where $A$ is a finite subset of $T$ and $[A]$ 
is its definable closure, in this case just the lower subsemilattice generated by $A$. If $a$ is maximal in $A$ 
(or if $A = \emptyset$, $a$ can be $-\infty$), we let $U_a = \{x \in T: a < x\}$, and if $a < b$ are consecutive 
members of $[A]$, then $L_{ab} = \{x \in T: a < x < b\}$ (the linear piece between $a$ and $b$; again here $a$ 
can be $-\infty$ in the case when $b$ is minimal in $A$), and $S_{ab} = \{x \in T: a < x, b \not \le x\}$ (the 
corresponding side piece). Now $L_{ab}$ is linear, and we may also consider $L_{ab}^+$, which is the same thing, 
but calculated in $T^+$, that is, including ramification points too, which are coloured according to which orbit 
they lie in. If $x \in L_{ab}^+$, then $C_x = \{y \in T: b \wedge y = x\}$, which is the set of points which 
branch off from $L_{ab}^+$ at $x$. This is a union of cones at $x$.

A key remark is that the pointwise stabilizer $G_A$ acts transitively on each $U_a$, $L_{ab}$, and $C_x$ (here heavily 
using the assumption that the maximal chains in $T^+$ are dense). But more is true. 
In fact for any $b, c \in U_a$, there is an automorphism of $T$ taking $b$ to $c$ which fixes $A$ pointwise, {\em and} has
support contained in $U_a$, with a similar statement holding for $C_x$. The analogous statement also holds for
weak 2-transitivity. For instance, if $a < b < c$ and $a < d < e$, then by weak 2-transitivity there is an 
automorphism $f$ fixing $a$ and taking $b$ to $d$, and applying weak 2-transitivity again, there is an automorphism
$g$ fixing $d$ and taking $f(c)$ to $e$, and $f$, $g$ may be chosen fixing all points below $a$ and $d$ respectively.
Then $gf$ fixes $a$ and takes $b$ to $d$ and $c$ to $e$.

Although not actually needed, we can describe the orbits of $G_A$. They are of the form $U_a$, and $L_{ab}$, as well 
as the union of the cones in $S_{ab}$ which meet $L_{ab}$ at points lying in some orbit of $G_A$ on $L_{ab}^+$.

Now being a tree with dense chains is first order expressible, and so this must be true of $(U_{T_1}, \prec)$. We shall show
that on each set of the form $U_a$ or $C_x$, $\prec$ is equal to $<$ or the empty set (in which case it would be an antichain, 
i.e. pairwise $\prec$-incomparable). We just treat $U_a$, as $C_x$ is similar.

First consider a maximal point $a$ of $A$ (and let $a$ have a default value of $-\infty$ if $A = \emptyset$). First we note
that if $b, c \in U_a$ are $<$-incomparable, then they are also $\prec$-incomparable. This is because there is an 
element of $G_A$ swapping $b$ and $c$. Since $G_A$ preserves $\prec$, $b \prec c \Leftrightarrow c \prec b$, and since
$\prec$ is antisymmetric, $\neg b \prec c \wedge \neg c \prec b$. 

Next we show that there cannot be points $b, c \in U_a$ such that $b < c$ and $c \prec b$. For, if so, by weak
2-transitivity of $G_A$ on $U_a$, for $x, y \in U_a$, $x < y \Rightarrow y \prec x$. Choose $z > a$ and incomparable 
$x, y > z$. Thus $x, y \prec z$. By the previous paragraph, $x$ and $y$ are also $\prec$-incomparable, so this 
contradicts $(U_{T_1}, \prec)$ a tree. 

Next suppose that there are $b, c \in U_a$ such that $b < c$, and $b$ and $c$ are $\prec$-incomparable. By 
weak 2-transitivity of $U_{T_1}$, $a < x < y$ implies that $x$, $y$ are $\prec$-incomparable. But also as shown above, if
$b$ and $c$ are $<$-incomparable, they are also $\prec$-incomparable, and we deduce that $\prec$ is the empty relation
on $U_a$.

Otherwise, it follows that for $x, y \in U_a$, $x < y \Rightarrow x \prec y$. Since also $x, y$ $<$-incomparable
$\Rightarrow x, y \prec$-incomparable and $y < x \Rightarrow y \prec x$, we deduce that $x < y \Leftrightarrow x \prec y$. 
In other words, $<$ and $\prec$ agree on $U_a$.

Now let us consider $S_{ab}$ where $a < b$ are consecutive members of $A$. First note that by a proof similar to the above,
if $x, y$ are $<$-incomparable members of $C_c$ for $a < c < b$, then they are also $\prec$-incomparable, since there is a 
member of $G_A$ which swaps $x$ and $y$ (and fixes $c$). We can deduce that if $a < c < d < b$, and $x \in C_c$, $y \in C_d$,
then $x$ and $y$ are $\prec$-incomparable. For suppose that $x \prec y$ (a similar argument applying if $y \prec x$). Choose
$x' \in C_c$ which is $<$-incomparable with $x$. Then by what we have just shown, $x$, $x'$ are $\prec$-incomparable. 
Furthermore, there is a member of $G_A$ which swaps $x$ and $x'$, and having support contained in $C_c$, and so this also
fixes $y$. Since $G_A$ preserves $\prec$, $x' \prec y$, but this contradicts $\prec$ a tree.

It follows that any $\prec$-comparabilities in $S_{ab}$ must hold between members of the same $C_c$. Suppose then that
$x < y$ and $x \prec y$ hold for some $x, y \in C_c$. As $G_A$ acts weakly 2-transitively on $C_c$, $x < y \Rightarrow x \prec y$. 
Since we have also shown that $x$, $y$ $<$-incomparable implies that they are $\prec$-incomparable, we deduce that $<$ and $\prec$
agree on $C_c$. 

The main remaining point is to show that there is some $a$ or $c$ such that $\prec$ is non-empty on $U_a$ or $C_c$. Suppose not. Then
each $U_a$ is an antichain, and since the members of distinct $C_c$s for $a < c < b$ are incomparable, each $S_{ab}$ is also an
antichain. Consider the possible relationship between $a \in A$ and members of $U_b$ under $\prec$. The group $G_A$ acts transitively 
on $U_b$, and so if any member of $U_b$ is $\prec$-less than $a$, they all are, which violates $\prec$ a tree. Similarly, for members
of $C_c$. The same argument applies to the possible relationship between members of distinct $U_a$s, or $C_c$s, or between members 
of $U_a$ and $C_c$. We deduce that the union of all the $U_a$s and $S_{ab}$s is an antichain, and all its members are either 
incomparable with every member of $A$, or above some member of $a$. However, since $(U_{T_1}, \prec)$ is a tree satisfying the
same first order sentences as $(U_{T_1}, <)$, all its chains are infinite, so this is impossible.

We deduce that there is $a$ or $c$ such that $<$ and $\prec$ agree on $U_a$ or $C_c$. We just treat the first case, and show that 
for any $b \in U_a$, for $x \in T_1$, $b < x \Leftrightarrow b \prec x$. From left to right is already known. Suppose for a 
contradiction that there are some $b \in U_a$ and $x$ such that $b \prec x$ but not $b < x$. Since $<$ and $\prec$ agree on $U_a$, 
$x \not \in U_a$. Choose $c \in U_a$ incomparable with $b$. Then there is a member of $G_A$ taking $b$ to $c$ and with support 
contained in $U_a$. This preserves $\prec$, and fixes $x$, and hence $c \prec x$. Since $\prec$ is a tree, $b$ and $c$ are 
$\prec$-comparable, and as they both lie in $U_a$, they are also $<$-comparable, which gives a contradiction.

Finally we choose any $b \in U_a$, and observe that the ramification orders arising above $b$ in $<$ and $\prec$ are equal to those
arising in $T_1$ and $T_2$ respectively. Since these are determined by first order formulae, it follows that $|U_{T_1}|$ and $|U_{T_2}|$ 
have equal codes after all.  \end{proof}

\section{Balanced trees}

To study Dedekind finite sets not lying in $\Delta_5$ by using FM constructions, in view of Lemma \ref{2.1}, 
we are naturally led to consider possible tree structures with $\omega$ levels and no leaves. We note that 
we didn't say that our $X$ was Dedekind-finite, though that is actually what we had in mind. To achieve this, 
it is convenient to insist at the very least, if we are trying to use the tree as the set of atoms in a 
Fraenkel--Mostowski model, that the group of automorphisms of the tree act transitively on each level. This 
is captured by the idea of `balanced tree', which was introduced in \cite{Forster}, and can be extended. We 
start by recalling the simplest case, of finite levels. A classical result about this situation is K\"onig's 
Lemma, which says that an infinite tree with $\omega$ levels, all finite, has an infinite branch (maximal 
linearly ordered subset). This requires the axiom of choice in its proof, and in \cite{Forster} and 
\cite{Truss2} the versions of the axiom of choice needed for various special cases are studied. 
   
A finite-branching tree $T$ is called {\em balanced} if the sets of immediate successors of each vertex 
$x$, denoted by ${\rm succ}(x)$, in the same level are of equal cardinality.  The case that $T$ branches 
infinitely is more complicated so we shall deal with that in the later part of this section.

For now, by {\em subtree} of a tree we understand a downwards closed subset under the induced 
ordering. Note that this is also a tree, and the level that an element of the subtree lies in does 
not alter from the original tree.

\begin{lemma} Any tree $(T, \le)$ with $\omega$ levels, all finite, has a balanced subtree, having no leaves,
and also having $\omega$ levels. \label{4.1}  \end{lemma}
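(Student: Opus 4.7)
The plan is to first pass to the subtree $T^*$ of vertices whose descendants reach arbitrarily high levels (which automatically has no leaves), and then recursively construct a balanced subtree of $T^*$ by taking, at each level, the same number of children from each included vertex.

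First, define $T^*$ to be the set of $x \in T$ such that for every $m$ strictly greater than the level of $x$, there is a descendant of $x$ on $L_m$. Clearly $T^*$ is downward closed and contains the root $r$ (since $T$ has $\omega$ levels). The key claim is that $T^*$ has no leaves: given $x \in T^*$ with children $c_1, \ldots, c_\ell$ in $T$, for every $m$ above the level of $x$ some $c_i$ has a descendant on $L_m$; finiteness of the set of children forces, by pigeonhole, some $c_i$ to have descendants on infinitely many such levels, and hence (by filling in intermediate ancestors) on every level above $c_i$, placing $c_i$ in $T^*$.

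Next, construct $T'$ inductively. Put $S_0 = \{r\}$. Given $S_n \subseteq L_n \cap T^*$ finite and nonempty, let $k_n = \min\{d_{T^*}(x) : x \in S_n\}$, where $d_{T^*}(x)$ denotes the number of children of $x$ in $T^*$; this is a positive integer since $T^*$ has no leaves. For each $x \in S_n$ select $k_n$ children of $x$ in $T^*$, and let $S_{n+1}$ be their union. Then $T' = \bigcup_n S_n$ is downward closed, has $\omega$ levels (since every $S_n$ is nonempty), has no leaves, and is balanced, because every vertex in $S_n$ has exactly $k_n$ children in $S_{n+1}$.

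The main obstacle is the implicit choice when selecting $k_n$ specific children of each vertex in $S_n$ at every level. Each step involves only finitely many finite choices, but the recursion requires such choices at $\omega$ many stages, so a weak choice principle is in use (essentially König's lemma for finite-level trees). This can be made explicit by considering the auxiliary tree whose nodes at depth $n$ are the balanced initial segments of $T^*$ comprising levels $0, 1, \ldots, n$: this auxiliary tree has finite levels (since each $L_n$ is finite, bounding the number of such initial segments) and, by the inductive argument above, is nonempty at every depth, so König's lemma produces an infinite branch yielding the required $T'$.
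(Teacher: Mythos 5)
Your first step (passing to the subtree of nodes having descendants on every higher level) is fine and choice-free, but the rest of the argument has a genuine gap: it relies on a choice principle that is not available in the setting this lemma is designed for. The lemma is meant to hold in ZF/FM without choice --- the trees to which it will be applied live on Dedekind-finite sets and typically cannot be well-ordered --- and the paper inserts a remark immediately after its own proof warning that the ``prune level by level, choosing which branches to keep'' strategy silently invokes AC and provably fails in a suitable Fraenkel--Mostowski model. Your recursion is exactly of that kind: ``for each $x \in S_n$ select $k_n$ children'' requires a simultaneous selection at each of $\omega$ stages. You do flag this, but the proposed repair does not close the gap: K\"onig's Lemma for an arbitrary finitely branching $\omega$-tree is itself equivalent to countable choice for finite sets, and your auxiliary tree of balanced initial segments is a countable union of finite sets of subsets of $T$ with no canonical ordering of its levels (indeed $\bigcup_n L_n$ itself need not be countable without choice), so K\"onig's Lemma cannot be applied to it. Contrast this with the one place the paper does invoke K\"onig's Lemma later in Section 4, for the template: there each level carries a definable linear order, which is exactly what makes that application legitimate.

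The paper's proof avoids all selection by making each pruning step canonical: it fixes a schedule $(k_n)$ in which every level number recurs infinitely often, and at stage $n$ it keeps \emph{all} nodes of level $k_n$ that have infinitely many points above them and, subject to that, have minimal degree, deleting everything above the remaining nodes of that level; no children are ever ``selected''. Because the levels are finite and each is revisited infinitely often, every level of the resulting decreasing sequence of subtrees stabilizes, and the intersection is the desired balanced, leafless subtree with $\omega$ levels. To salvage your write-up you would need to replace ``select $k_n$ children of each $x$'' by a definable rule of this sort; as it stands the argument is not valid without choice.
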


\begin{proof} Let $S$ be a sequence of natural numbers such that every number occurs infinitely often, 
say $S = \langle k_n: n \in \omega \rangle$. We construct a decreasing sequence $T_n$ of subtrees 
of $T$, each having $\omega$ levels, such that for each $n$, $T_n$ is pruned on level $k_n$ so that 
every member has the same degree (number of immediate successors) on that level. Let $L_n$ be the 
$n$th level of $T$.

First let $T_0 = T$. Now suppose that $T_n$ has been constructed. Let $X_n$ be the set of
members $x$ of height $k_n$ in $T_n$ such that $\{y \in T_n: x \le y\}$ is infinite, and subject to 
this have minimum degree (since $T_n$ has $\omega$ levels, at least one node on that level has 
infinitely many points above it). Let $T_{n+1}$ be the subtree obtained by removing all members 
of $T_n$ $\ge$ some member of $L_{k_n} \setminus X_n$. Then $T_{n+1}$ is a subtree of $T_n$ also having
$\omega$ levels. Let $T^* = \bigcap_{n \in \omega} T_n$. It remains to show that $T^*$ is balanced 
and has no leaves.

Consider any $n$. Since each member of $\omega$ is listed infinitely often, $\{m: k_m = n\}$ is
infinite. Let $L_n^m$ be the $n$th level of $T_m$. Then $L_n^0 \supseteq L_n^1 \supseteq L_n^2 \ldots$
is a decreasing sequence of non-empty finite sets, so is eventually constant, and clearly this equals the 
$n$th level of $T^*$. Furthermore every $x$ in the $n$th level of $T^*$ has at least one successor in
every $T_m$, and hence also in $T^*$, so is not a leaf, and for the same reason, its degree is 
constant among nodes on that level. Hence $T^*$ is a balanced subtree of $T$.
  \end{proof} 

Note that it is tempting to try to prove this in a `simpler' way, by pruning the tree successively
in $\omega$ steps, at step 1 pruning branches above nodes on level 1 to ensure that all nodes on
level 1 now have the same degree (the original minimum), then repeating this on level 2 and so on. 
The drawback is that one has to choose which branches to remove at each stage, so AC has been used.
It is easy to show by a Fraenkel--Mostowski model that this method cannot work in general. 

Now let us see how we can refine the method given in the lemma. The aim here is to show that any
finitely branching tree can be written in a canonical way as a union of balanced subtrees. The change in 
the method is that at the induction step, we retain all nodes, but partition them according to degrees 
of nodes on the next level up, and also higher levels.

\begin{theorem} Let $(T, \le)$ be a tree with $\omega$ levels, all finite, with $k$th level $L_k$. 
Then there is a sequence of ordered partitions $((\pi_k, <): k \in \omega)$ such that $\pi_k$ is a partition
of $L_k$, and such that for any $X \in \pi_k$ and $Y \in \pi_{k+1}$ there is $n(X,Y) \in \omega$ such that for
each $x \in X$ the number of successors of $x$ lying in $Y$ is equal to $n(X,Y)$. \label{4.2}  \end{theorem}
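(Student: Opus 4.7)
The plan is to obtain the $\pi_k$ as the limit of an iterative color-refinement process on the levels. Set $\pi_k^0 = \{L_k\}$ for every $k$, and inductively define $\pi_k^{n+1}$ to be the refinement of $\pi_k^n$ in which $x$ and $y$ lie in a common block iff they already do in $\pi_k^n$ and, in addition, for every $Y \in \pi_{k+1}^n$ one has $|\mathrm{succ}(x) \cap Y| = |\mathrm{succ}(y) \cap Y|$. A short induction shows that $\pi_k^{n+1}$ really does refine $\pi_k^n$: because $\pi_{k+1}^n$ refines $\pi_{k+1}^{n-1}$, the profile of a node over blocks of $\pi_{k+1}^n$ determines its profile over blocks of $\pi_{k+1}^{n-1}$ by summing.

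Since each $L_k$ is finite, the monotonically refining sequence $(\pi_k^n)_n$ can be strictly refined at most $|L_k|-1$ times, so it stabilizes at some finite stage $N_k$; let $\pi_k := \pi_k^{N_k}$. To verify the equitability condition, fix $X \in \pi_k$, $Y \in \pi_{k+1}$, and $x, x' \in X$. By stabilization at level $k$, $x$ and $x'$ agree in $\pi_k^n$ for every $n$, and in particular for $n = N_{k+1}+1$; unwinding the definition, this says that $|\mathrm{succ}(x) \cap Y'| = |\mathrm{succ}(x') \cap Y'|$ for every $Y' \in \pi_{k+1}^{N_{k+1}} = \pi_{k+1}$, so setting $n(X,Y) := |\mathrm{succ}(x) \cap Y|$ gives a well-defined natural number. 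Finally, fix an arbitrary linear ordering on each (finite) set of blocks $\pi_k$ to obtain the required ordered partitions.

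The one point that needs care is the apparent circularity: the refinement on level $k$ uses the partition on level $k+1$, whose own stabilization stage $N_{k+1}$ might occur later than $N_k$. This does not actually cause trouble, because stabilization of $\pi_k^n$ at $N_k$ means $\pi_k^n$ is constant for all $n \ge N_k$; hence any two $x, x' \in X$ are automatically $\sim_n$-equivalent for every $n$, including the particular $n = N_{k+1}+1$ that certifies equitability against $\pi_{k+1}$. Thus the hierarchy of $N_k$'s is irrelevant to the argument and one does not need any comparative estimate on them.
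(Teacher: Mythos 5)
Your combinatorial core is sound and is essentially the paper's argument: iterated profile refinement of the level partitions until stabilization, with the equitability of the limit partitions certified by running the refinement one more step against the already-stable partition of the next level. (You refine all levels simultaneously at each stage, whereas the paper refines one level per stage according to a schedule that revisits every level infinitely often; this is only an organizational difference, and your handling of the interaction between the stabilization stages $N_k$ and $N_{k+1}$ is correct.)

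There is, however, one genuine gap, and it is precisely at the point where the paper takes the most care: your final sentence ``fix an arbitrary linear ordering on each (finite) set of blocks $\pi_k$.'' This is a simultaneous choice of one linear ordering from each of countably many finite nonempty sets, i.e.\ an instance of countable choice for finite sets, which is not available in the ambient choiceless theory. The tree $T$ itself need not be well-orderable (it may, for instance, be built over Russell's socks), so the levels $L_k$, and hence the sets of orderings of the $\pi_k$, come with no canonical elements; ``arbitrary'' is exactly what you cannot afford here. Moreover the orderings are not decoration: the point of the theorem is that the template extracted from $T$ is canonically well-orderable, so that K\"onig's Lemma can later be applied to it without AC. The repair is the one the paper makes: carry the orderings through the induction on the refinement stage $n$. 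At stage $0$ each $\pi_k^0$ is a singleton, hence trivially ordered; at stage $n+1$, the new blocks inside a block $X$ of $\pi_k^n$ are distinguished by their profiles, which (using the stage-$n$ ordering of $\pi_{k+1}^n$) are genuine finite sequences of natural numbers and so can be ordered definably, and one extends the ordering of $\pi_k^n$ so that the sub-blocks of each old block form a convex segment. The limit orderings then stabilize along with the partitions and require no choices.
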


\begin{proof} We use the same sequence $(k_n)$ as in the previous proof, in which each member of $\omega$ 
appears infinitely often, and for each $k$ we find a refining sequence of partitions 
$((\pi_k^n, <): n \in \omega)$ of $L_k$, which here is taken to mean that each member of $\pi_k^n$ is a 
union of a convex subset of $\pi_k^{n+1}$. We start with each $\pi_k^0$ 
being the trivial partition of $L_k$ into just one piece.

Now assume that $(\pi_k^n, <)$ has been chosen for each $k$. If $k \neq k_n$, then $\pi_k^{n+1} = \pi_k^n$.
So we just have to define $\pi_k^{n+1}$ (and a suitable ordering of it) in the case that $k = k_n$. For each
$x \in L_k$, and $X \in \pi^n_{k+1}$, consider the sequence of natural numbers $(|X \cap succ(x)|: X \in \pi^n_{k+1})$
of length the cardinality of $\pi^n_{k+1}$ and let $x \sim y$ if the sequences corresponding to $x$ and $y$ are equal.
We then take $\pi^{n+1}_k$ to be the least common refinement of $\pi^n_k$ and the partition determined by $\sim$. We
can (definably) linearly order the finite sequences of natural numbers and hence extend the ordering of $\pi^n_k$ to
$\pi^{n+1}_k \upharpoonright X$ for each $X \in \pi^n_k$ which will thus form a convex subset.

Since $(\pi^n_k: n \in \omega)$ is a refining sequence and $L_k$ is finite, it must be eventually constant, at $\pi_k$ 
say. The relationship between $\pi_k$ and $\pi_{k+1}$ may now be described. Clearly as the sequence has stabilized,
$\sim$ must be trivial on each member of $\pi_k$. So if $X \in \pi_k$, $(|Y \cap succ(x)|: Y \in \pi_{k+1})$ is the same
sequence for each $x \in X$. Since $|succ(x)|$ is the sum of this sequence, this means in particular that each $x \in X$ 
has the same number of successors. And for each $Y \in \pi_{k+1}$, each $x \in X$ has the same number of successors in $Y$,
which we write as $n(X,Y)$ (which is allowed to be 0).     \end{proof} 

From the construction just given, we form what we call a {\em template} $(T^*, \prec)$ from $T$, which is also a finitely 
branching $\omega$-tree, but with extra structure added. The $n$th level of $T^*$ is just $\pi_n$, and if $X \in \pi_n$ and 
$Y \in \pi_{n+1}$, then $X \prec Y$ provided that $n(X,Y) \neq 0$ (and $\prec$ on the whole of $T^*$ is the transitive
closure of these individual relations). The additional structure that $T^*$ has is first of all the linear ordering of 
each level (which was defined in the above construction), and in addition the sequence of positive integers 
$(n(X,Y): Y$ a $\prec$-successor of $X)$ (with ordering derived from that of $\pi_{n+1}$). We note that
$x$ is a leaf of $T$ if and only if $n(X,Y) = 0$ for all $Y$ on the next level, where $x \in X$.

Now since $(T^*, \prec)$ with the ordering of the levels is a finitely branching $\omega$-tree, we can apply K\"onig's
Lemma to obtain an infinite branch. We note that this is legitimate, since we can invoke the ordering of each level to avoid
appealing to AC. Alternatively, we can remark that in fact the set of all nodes of the template can be well-ordered (for
essentially the same reason), so it actually has cardinality $\aleph_0$. Thus, even though the role of the template is to 
help describe potentially non-well-orderable phenomena (the possible $(T,<)$), it can itself be well-ordered.

In fact, if $\sigma = (X_0, X_1, X_2, \ldots)$ is such an infinite branch, meaning that $X_n \prec X_{n+1}$ for each $n$ and
$X_n$ lies in the $n$th level of $T^*$, then the tree induced from $T$ on $\bigcup_{n \in \omega}X_n$ is a balanced 
tree (noting that $X_0$ is trivial since $|L_0| = 1$, so it only admits the trivial partition). The intuition 
is that $T^*$ somehow collects together all the different possibilities for balanced subtrees of $T$.

Abstracting from the the above, a {\em template} is a finitely branching $\omega$-tree in which each level is linearly ordered,
and such that each node is assigned a finite sequence of positive integers whose entries are in bijective correspondence with 
its set of successors. A template {\em encodes} a finitely branching $\omega$-tree $T$ if it arises as $T^*$ from $T$ in the 
above construction. We may spell this out more precisely by saying that $T^*$ encodes $T$ if there are ordered partitions 
$\pi_n$ of the levels of $T$ which correspond to the levels of $T^*$, and for each $x \in X \in \pi_n$ labelled by the 
sequence $(n_i)$, and $Y \in \pi_{n+1}$ such that $Y$ is the $i$th $\prec$-successor of $X$, there are $n_i$ members of $Y$
greater than $x$.

If $(T^*, \prec)$ is a template, and $(X_0, X_1, X_2, \ldots)$ is an $\omega$-branch, then there is a sequence of positive integers
whose $n$th entry is the label at $X_n$ corresponding to $X_{n+1}$. This is called an {\em eventually singleton branch} if 
for all but finitely many $n$, this label is 1.

For trees $T_1$ and $T_2$ having $\omega$ finite levels, we say that $T_1$ is {\em locally embeddable} in $T_2$ if for 
each $n$, the union of the first $n$ levels of $T_1$ is embeddable as a subtree of the first $n$ levels of $T_2$. 
If $T_1$ is locally embeddable in $T_2$, and vice versa, we say that $T_1$ and $T_2$ are {\em locally isomorphic}. 

\begin{lemma} (i) For any template $(T^*, \prec)$ there is a finitely branching $\omega$-tree $(T, <)$ encoded by $(T^*, \prec)$.

(ii) Any two trees encoded by $(T^*, \prec)$ are locally isomorphic.

(iii) If $(T^*, \prec)$ has an eventually singleton $\omega$-branch, then $(T, <)$ has an infinite branch.

(iv) If $(T^*, \prec)$ is a template having no eventually singleton branch, then in a suitable Fraenkel--Mostowski model, 
there is a tree encoded by $(T^*, \prec)$ having no infinite branch. \label{4.3}  \end{lemma}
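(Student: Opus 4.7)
For (i), I plan to construct $T$ by induction on levels. The unique class of $\pi_0$ gives the root; given a node $x$ at level $n$ in class $X$ with label sequence $(n_1, \ldots, n_k)$ in bijection with the $\prec$-successors of $X$, I attach $n_i$ new children in the $i$th $\prec$-successor of $X$. The resulting tree is finitely branching with $\omega$ levels, and the encoding conditions hold by construction. For (ii), induction on $n$ suffices: since the template determines the cardinality of each class on each level (as a product of labels along the relevant path in $T^*$) and the multiplicity of successors of each node in each later class, a partial isomorphism on the first $n$ levels extends level by level.

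For (iii), let $(X_0, X_1, \ldots)$ be an eventually singleton $\omega$-branch of $T^*$, with $n(X_n, X_{n+1}) = 1$ for $n \geq N$. The realization of $X_N$ in $T$ is a nonempty finite set, so a single choice yields an element $x_N$. For each $n \geq N$, the node $x_n$ has a unique successor in $X_{n+1}$, which we take as $x_{n+1}$; below $x_N$ the predecessor chain gives $x_0 < \cdots < x_{N-1}$ uniquely. Together these form an infinite branch of $T$ without recourse to AC.

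For (iv), take $T$ as constructed in (i), and observe that the recursion proceeds identically from any two nodes in the same class, so the subtrees above such nodes are canonically isomorphic as class-labelled trees. Let $G$ be the group of all automorphisms of $T$ preserving the class partition, and form the Fraenkel--Mostowski model $\mathfrak{N}$ with $T$ as the set of atoms, group $G$, and the filter generated by pointwise stabilizers of finite subsets of $T$. Suppose for contradiction that an infinite branch $B = \{b_n : n \in \omega\} \in \mathfrak{N}$ exists, with finite support $A$; let $M$ be the maximum level of an element of $A$, and let $X_n$ be the class containing $b_n$. Since the template has no eventually singleton branch, there is $n > M$ with $n(X_{n-1}, X_n) \geq 2$, so $b_{n-1}$ has a second successor $b_n' \in X_n$ distinct from $b_n$. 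The canonical isomorphism between the subtrees above $b_n$ and above $b_n'$ extends to an automorphism $\pi \in G$ which swaps these two subtrees and fixes the rest of $T$; since all members of $A$ lie at levels below $n$, $\pi \in G_A$, while $\pi(b_n) = b_n' \notin B$ shows $\pi(B) \neq B$, contradicting that $A$ supports $B$.

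The principal obstacle is (iv), specifically verifying that the subtrees above two nodes in the same class are genuinely isomorphic, not merely locally isomorphic as in (ii). This should follow from the fact that the recursion in (i) depends only on the sub-template rooted at the class in question, so an explicit isomorphism can be built by a parallel recursion; but this uniformity must be confirmed during the construction in (i). Once in hand, the Fraenkel--Mostowski argument fits the familiar pattern: the absence of an eventually singleton branch guarantees infinitely many positions along $B$ where a nontrivial sibling swap is possible, and choosing such a position above the finite support yields the desired automorphism in $G_A$ moving $B$.
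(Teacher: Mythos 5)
Your proposal is correct and follows essentially the same route as the paper: the same level-by-level construction for (i), local isomorphism via determinacy of finite stages for (ii), following unique successors along an eventually singleton branch for (iii), and for (iv) the Fraenkel--Mostowski model whose group fixes each template class, with an infinite branch killed by a sibling swap above its finite support. The only difference is one of explicitness: the paper's (iv) defers to the earlier balanced-subtree argument, whereas you spell out the swap directly, and the uniformity worry you flag (that subtrees above nodes in the same class are genuinely, not just locally, isomorphic) is unproblematic because the construction and the swap both take place in the choice-satisfying ground model.
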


\begin{proof} (i) Let $(T^*, \prec)$ be a template with $n$th level $P_n$ and positive integers attached as in the
definition. We build $(T,<)$ level by level, using induction. There will be a bijection between $P_n$ and a partition $\pi_n$ of
$L_n$. Choose any root for $T$, and let $\pi_0$ be the trivial singleton partition of $L_0$ which just contains the root, and
$P_0$ corresponds to $\pi_0$.

Assuming that $L_n$ has been chosen, as has a partition $\pi_n$ of $L_n$ which corresponds to $P_n$ under some bijection, for
$X \in P_n$ let $Y_1, \ldots, Y_k$ be the members of $P_{n+1}$ above $X$ under $\prec$, with positive integers $n_1, \ldots, n_k$
attached. Under the bijection given by the induction hypothesis, $X$ may be viewed as a subset of $L_n$. For each $x \in X$, and
$i$ between 1 and $k$, choose $n_i$ new points which together (as $i$ varies) will form $succ(x)$ in $T$. Doing this for all
$x$ determines $L_{n+1}$. The $(n+1)$th level $P_{n+1}$ of $T^*$ is now identified with a partition $\pi_{n+1}$ of $L_{n+1}$ by placing
$y_1$ and $y_2$ in the same piece of $\pi_{n+1}$ provided that for some $X \in \pi_n$, $x_1, x_2 \in X$, and $i$ between 1 and $k$,
$y_1$ and $y_2$ are adjoined corresponding to that value of $i$.

(ii) We observe that we have the freedom to choose the new points adjoined with or without an ordering. Thus if we are (secretly) 
thinking of them as ordered, then the whole of the tree can be well-ordered, and has cardinality $\aleph_0$. If we make no attempt
to order them, then there is no reason why this should be true. So we do not expect the two possibilities to give rise to 
isomorphic trees. It is clear however that they will be locally isomorphic, since their behaviour is identical up to any finite
stage.

(iii) As remarked above, a branch of the template corresponds to a balanced subtree of $T$, and saying that this is eventually singleton
just says that this subtree is eventually just a single branch.

(iv) What we really want to say is that $(T,<)$ has an infinite branch if and only if its template has an eventually singleton branch,
but for the reasons explained this is too strong. Suppose then that $(T^*, \prec)$ is a template and that it encodes $(T, <)$. We form
a Fraenkel--Mostowski model in which $U$ is indexed by the members of $T$, and we let $G$ be the group of automorphisms of $U$ induced by
tree automorphisms of $(T,<)$ which fix each member of $T^*$, and we let $\mathfrak N$ be the FM-model defined using finite supports.
One sees that $G$ acts on each subtree obtained as above from an $\omega$-branch of $T^*$. This is a balanced tree which has
infinitely many non-singleton entries, and so as for the simplest construction given from just one balanced tree, has no infinite
branch in $\mathfrak N$. Since any infinite branch of $T$ would have to arise in this way, there cannot be any at all. \end{proof}

We note that in the absence of AC, two non-isomorphic trees may arise from isomorphic templates (for instance as in
Lemma \ref{4.3}(iv) there can be two trees arising from $T^*$, one of which can be well-ordered, and the other cannot). 
In the next result we give a more precise result explaining what happens. 

\begin{lemma} (i) Assuming the axiom of choice, for any two $\omega$-trees with finite levels, $T_1$ 
is locally embeddable in $T_2$ if and only if it is embeddable in $T_2$.

(ii) Assuming the axiom of choice, $T_1$ and $T_2$ are locally isomorphic if and only if they
are isomorphic.

(iii) $T_1$ and $T_2$ are locally isomorphic if and only if they are elementarily equivalent. \label{4.4}  \end{lemma}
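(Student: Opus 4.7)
The plan is to prove each part in turn, with $(\Leftarrow)$ being routine in (i) and (ii), and the substance lying in applying König's Lemma to trees of partial maps.

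For (i), the backward direction is trivial (restrict any embedding to the first $n$ levels). For the forward direction, I would form the set of all subtree-embeddings of $T_1|_n$ into $T_2|_n$, partially ordered by restriction. Because each $T_i|_n$ is a finite tree, each such set is finite, and by local embeddability each is non-empty; under restriction they form a finitely-branching $\omega$-tree with every level non-empty. König's Lemma (available under AC and in fact with only the assumed structure here) produces an infinite branch, whose union is an embedding $T_1 \hookrightarrow T_2$. For (ii), I would first observe that mutual subtree-embedding of finite trees $T_1|_n \hookrightarrow T_2|_n \hookrightarrow T_1|_n$ with downward-closed images forces $|T_1|_n|=|T_2|_n|$ and hence each embedding is an isomorphism; then I apply the same König's Lemma argument to the tree of isomorphisms $T_1|_n \to T_2|_n$ under restriction.

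For (iii)($\Leftarrow$), I note that the predicate ``$x$ lies on level $k$'' is first-order definable from $\leq$ (there is a maximal chain of length exactly $k$ ending at $x$), and hence for each finite tree $F$ the sentence ``the induced structure on levels $\le n$ is isomorphic to $F$'' is first-order expressible. Elementary equivalence then transports the isomorphism type of $T_1|_n$ to $T_2|_n$ for every $n$.

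For (iii)($\Rightarrow$) I would set up an $L_{\infty\omega}$ back-and-forth system $\mathcal{F}$ consisting of all finite partial isomorphisms $f\colon T_1 \to T_2$ such that for all sufficiently large $N$, $f$ extends to an isomorphism $T_1|_N \to T_2|_N$. Local isomorphism gives $\emptyset \in \mathcal{F}$. Given $f\in\mathcal{F}$ and $a\in T_1$ at level $k$, for each large $N$ choose an extending isomorphism $\phi_N$; since level $k$ of $T_2$ is finite, pigeonhole produces a single $b$ such that $\phi_N(a)=b$ for a cofinal set of $N$. Restricting such a $\phi_N$ to $T_1|_{N'}$ for any $N' \le N$ (which remains a level-preserving isomorphism $T_1|_{N'}\to T_2|_{N'}$) shows that $f \cup \{(a,b)\}\in \mathcal{F}$. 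Symmetry gives ``back''. Hence $T_1$ and $T_2$ are partially isomorphic in Karp's sense, so $L_{\infty\omega}$-equivalent, hence elementarily equivalent. The main obstacle is precisely this last step: finding a back-and-forth family that is simultaneously non-empty, closed under the desired extensions, and provably so in ZF; the resolution is that the pigeonhole-plus-restriction trick avoids any appeal to choice, because levels of $T_2$ are finite.
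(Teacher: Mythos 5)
Your proposal is correct and follows essentially the same route as the paper: K\"onig's Lemma applied to the finitely-branching tree of partial embeddings (resp.\ isomorphisms) of initial segments for (i) and (ii), and for (iii) a first-order sentence pinning down the isomorphism type of the first $n$ levels in one direction and back-and-forth in the other. Your treatment of (iii)($\Rightarrow$) via a Karp-style partial-isomorphism system with the pigeonhole-on-finite-levels argument is a more detailed (and in fact stronger, giving $L_{\infty\omega}$-equivalence) rendering of the paper's brief Ehrenfeucht--Fra\"iss\'e game sketch, but it is the same underlying idea.
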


\begin{proof}  
(i) Let $T_1$ be locally embeddable in $T_2$. Let $P_n$ be the family of all embeddings of the union of the
first $n$ levels of $T_1$ into the union of the first $n$ levels of $T_2$. Let $P = \bigcup_{n \in \omega}P_n$
be partially ordered by extension. Then $P$ is a tree with $n$th level $P_n$, each of which is finite, and by
assumption non-empty. By K\"onig's Lemma, $P$ has an infinite branch, and the union of this branch provides an
embedding of $T_1$ into $T_2$.

(ii) This follows by the same proof. 

(iii) Assume first that $T_1$ and $T_2$ are elementarily equivalent. For any $n$ there is a first order sentence
capturing the first $n$ levels of $T_1$ up to isomorphism. More precisely, if $N$ is the number of nodes in these
levels (clearly a finite number), the sentence says that there exist $N$ points, which are ordered in the
correct way, and any other $n$ points which are linearly ordered and form an initial segment are contained in
the $N$ first mentioned. Since $T_1$ and $T_2$ are elementarily equivalent, there must be corresponding $N$ points
of $T_2$, which are ordered in the same way as those in $T_1$, and which form the union of the first $n$ levels.
Hence the unions of the first $n$ levels of $T_1$ and $T_2$ are isomorphic.

Conversely, if $T_1$ and $T_2$ are locally isomorphic, then we can use back-and-forth (an
Ehrenfeucht-Fra\"iss\'e game) to see that they are elementarily equivalent. Suppose that this has $n$
moves. Then player II has a winning strategy, in which he uses given isomorphisms between the unions of
finitely many levels to decide what to do next.          \end{proof}

\vspace{.1in}

Now we shall allow levels to be infinite, and for this we have to modify the definition of `balanced'.
The levels will certainly be Dedekind finite, and usually weakly Dedekind finite, but the point is that 
they will or can carry some structure. The problem here is that in the not AC situation, the isomorphisms
which one might wish to exist between different successor sets may be absent, even though we want to view 
them as the same. For this we use the notion of `equivalence', introduced above.

We now say that a tree with $\omega$ levels and no leaves is {\em balanced} if at each level, the sets of 
successors of the nodes on that level are equivalent. Ideally we would like to show that any tree with 
$\omega$ levels all of which are weakly Dedekind finite has a balanced subtree with the same properties
(analogously to Lemma \ref{4.1}). In the absence of this, we give some examples and constructions which
illustrate what can happen. First of all we analyze the circumstances under which a tree of this kind 
can be shown to be Dedekind finite.

Let $X$ be a countably infinite set, and $G$ a group of permutations of $X$. Recall, as introduced above, 
that for any $A \subseteq X$, its {\em definable closure} $dcl(A)$ is the set of all $x \in X$ fixed by the 
pointwise stabilizer $G_A$ of $A$ in $X$. We say that $dcl$ is {\em locally finite} if $A$ finite implies 
that $dcl(A)$ is finite.
 
\begin{lemma} \label{4.5} Let $G$ be a group of permutations of the set $U$ of atoms in a model $\mathfrak M$
of $FMC$, and let $\mathfrak N$ be Fraenkel--Mostowski model thereby determined using finite supports. Then
$|U| \in \Delta$ in $\mathfrak N$ if and only if $dcl$ for $G$ on $U$ is locally finite.   \end{lemma}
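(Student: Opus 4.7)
My plan rests on a single observation: since $\omega$ and its members are pure sets, every $\pi\in G$ fixes them pointwise, so for any function $f:\omega\to U$ we have $\pi(f)=f$ iff $\pi(f(n))=f(n)$ for every $n$. Taking $\pi$ to range over $G_A$, this says that $f:\omega\to U$ is supported by $A$ iff its range is contained in $dcl(A)$. Both directions of the lemma then fall out of this equivalence.

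For the direction $(\Leftarrow)$ I argue by contrapositive. Assuming $|U|\notin\Delta$ in $\mathfrak N$, pick a bijection $f:\omega\to C\subseteq U$ in $\mathfrak N$ and a finite support $A$ for $f$. By the observation $C\subseteq dcl(A)$, so $dcl(A)$ is infinite, and $dcl$ fails to be locally finite.

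For $(\Rightarrow)$, again by contrapositive, assume that some finite $A$ has $dcl(A)$ infinite. Working in the ground model $\mathfrak M$ (where AC holds), choose any injection $f_0:\omega\to dcl(A)$. By the observation, $f_0$ is fixed by every element of $G_A$, so $G_{\{f_0\}}\supseteq G_A\in\mathfrak F$. Since the transitive closure of $f_0$ consists only of atoms and pure ordinals of low rank, all of which already lie in $\mathfrak N$, we have $f_0\in\mathfrak N$; its range is then a countably infinite subset of $U$ in $\mathfrak N$, so $|U|\notin\Delta$.

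There is no real obstacle to overcome here; the only point requiring care is that membership in $\mathfrak N$ is defined by a support clause \emph{together with} hereditary containment, but the hereditary clause is automatic for us because the objects involved in $f_0$ are either atoms (always in $\mathfrak N$) or pure ordinals.
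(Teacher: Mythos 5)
Your proposal is correct and follows essentially the same route as the paper: both directions rest on the observation that a function $\omega\to U$ is supported by a finite $A$ exactly when its range lies in $dcl(A)$, applied contrapositively in each direction. Your extra care over the hereditary-membership clause for $f_0$ is a detail the paper's proof leaves implicit, but the argument is the same.
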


\begin{proof} If $(x_n)$ is a sequence of distinct members of $U$ in $\mathfrak N$ it is supported by a 
finite $A \subseteq U$. Thus any member of $G$ fixing $A$ pointwise also fixes the sequence $(x_n)$,
and hence each $x_n$. Therefore every $x_n$ lies in $dcl(A)$, and so $dcl$ is not locally finite.

Conversely, if $dcl$ is not locally finite, there is finite $A \subseteq U$ for which $dcl(A)$ is
infinite. Let $x_n$ be distinct members of $dcl(A)$. Then $G_A$ fixes each $x_n$ and so the sequence
$(x_n)$ lies in $\mathfrak N$ and therefore $|U| \not \in \Delta$ in $\mathfrak N$. \end{proof}

We now want to apply this result to study models in which there are cardinals in $\Delta \setminus \Delta_5$. 
Let $(X_n: n \in \omega)$ be a sequence of countably infinite sets, and for each $n$ let $G_n$ be a transitive
group of permutations of $X_n$. We let the set of atoms $U$ be (indexed by) the finite sequences of the form 
$(x_i: i < n)$ for $x_i \in X_i$, $n \in \omega$. Then $U$ becomes a tree with $\omega$ levels under the 
relation of extension, and for each $\sigma \in U$ of length $n$, the set $succ(\sigma)$ of successors of 
$\sigma$ is a copy of $X_n$, so we may allow $G_n$ to act on it. In fact, more generally, $G_n$ acts on $U$ 
by fixing all sequences of length $\le n$, and permuting longer sequences just by the $(n+1)$th entry. In the 
Fraenkel--Mostowski model determined from this sequence of sets and group actions, we let $\mathcal G$ be the 
group of tree automorphisms of $U$ generated by the actions of all the $G_n$s as just described. (This is an 
iterated wreath product of the sequence of groups.)

\begin{theorem} \label{4.6} In the Fraenkel--Mostowski model $\mathfrak N$ determined from a sequence of
non-trivial sets $X_n$ and transitive group actions $G_n$, $|U| \in \Delta$ in $\mathfrak N$ if and only if 
$dcl$ is locally finite for each $G_n$ on $X_n$.  \end{theorem}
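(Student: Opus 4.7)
The plan is to apply Lemma \ref{4.5} to $U$ and $\mathcal{G}$. This immediately reduces Theorem \ref{4.6} to the purely group-theoretic statement that $dcl$ for $\mathcal{G}$ acting on $U$ is locally finite if and only if $dcl$ for each $G_n$ acting on $X_n$ is locally finite, so the two directions become statements about the iterated wreath-product action on the tree $U$.

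For the (easier) reverse direction, argue contrapositively: suppose some $G_n$ admits a finite $A \subseteq X_n$ with $dcl_{G_n}(A)$ infinite. Fix any sequence $\sigma = (a_0, \ldots, a_{n-1}) \in U$ at level $n$ and let $B = \{(a_0, \ldots, a_{n-1}, a) : a \in A\}$, a finite subset of $U$. Every $\pi \in \mathcal{G}_B$ fixes each element of $B$ and, being a tree automorphism, also fixes the common predecessor $\sigma$. In the wreath-product decomposition, the local action of $\pi$ on the successors of $\sigma$ is by some $g \in G_n$, and the hypothesis that $\pi$ fixes $B$ forces $g$ to fix $A$ pointwise, hence to fix every element of $dcl_{G_n}(A)$. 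Therefore $dcl_{\mathcal{G}}(B)$ contains the infinite set $\{(a_0, \ldots, a_{n-1}, x) : x \in dcl_{G_n}(A)\}$.

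For the forward direction, assume each $dcl_{G_n}$ is locally finite and let $B \subseteq U$ be finite. I would characterize $dcl_{\mathcal{G}}(B)$ by a level-by-level analysis. Since every $\pi \in \mathcal{G}$ is a tree automorphism, $dcl_{\mathcal{G}}(B)$ is downward closed in $U$. At any node $\rho = (x_0, \ldots, x_{i-1})$ lying in $dcl_{\mathcal{G}}(B)$, the stabilizer $\mathcal{G}_B$ acts on the successors of $\rho$ through the pointwise stabilizer in $G_i$ of the finite set $A_i^\rho = \{b_i : b \in B,\, b \text{ extends } \rho,\, |b| > i\}$. Hence the successors of $\rho$ that lie in $dcl_{\mathcal{G}}(B)$ correspond exactly to $dcl_{G_i}(A_i^\rho)$, which is finite by hypothesis, and is empty when $A_i^\rho = \emptyset$ because $G_i$ is transitive on the non-trivial set $X_i$. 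Consequently $dcl_{\mathcal{G}}(B)$ grows only above prefixes of elements of $B$, gains only finitely many nodes at each stage, and the recursion halts at level $\max\{|b| : b \in B\}$, producing a finite $dcl_{\mathcal{G}}(B)$.

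The main obstacle is the combinatorial bookkeeping for the iterated wreath product: verifying that $\mathcal{G}_B$ decomposes precisely as a product of pointwise stabilizers in each $G_i$ at the relevant nodes, and using transitivity and non-triviality of every $G_i$ to prevent $dcl_{\mathcal{G}}(B)$ from propagating past the depth of $B$.
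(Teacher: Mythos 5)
Your proposal is correct and follows essentially the same route as the paper: both directions reduce to Lemma \ref{4.5}, the failure of local finiteness for some $G_n$ is transported up to $U$ via the local action at a single node, and the converse is proved by showing that $dcl_{\mathcal G}(B)$ is the downward-closed set obtained by adjoining, at each prefix $\rho$ of an element of $B$, the finite set $dcl_{G_i}$ of the relevant coordinates, with transitivity and non-triviality of the $X_i$ stopping any propagation past the depth of $B$. The wreath-product bookkeeping you flag as the main obstacle is exactly the point the paper also asserts with only a brief justification, so there is no gap relative to the published argument.
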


\begin{proof} If $dcl$ for the action of some $G_n$ is not locally finite on $X_n$, then using the method of 
Lemma \ref{4.5} gives a countably infinite subset of $U$ in the model.

Otherwise we have to show that if $dcl$ is locally finite for each $G_n$ on $X_n$, then so is $dcl$ for 
$\mathcal G$ on $U$ (and then appeal again to Lemma \ref{4.5}). Let $A \subseteq U$ be finite. We can see
that $dcl(A)$ contains the subtree generated by $A$ (i.e. its downward closure), and this is still finite.
So now assuming that $A$ actually is this subtree, we observe that its definable closure is just the union
of the definable closures of its intersections with the successor sets arising. This follows since the
transitivity of each $G_n$ and the non-triviality of $X_n$ guarantee that no members of the definable closure 
of $A$ appear above the levels involving members of $A$. By hypothesis these are all finite, and hence so 
is $dcl(A)$.    \end{proof}

We remark that all the trees arising in such models $\mathfrak N$ are balanced. We now give some examples.

In the first case, each $succ(x)$ is finite. This was fully analyzed earlier in the section.

Usually all the groups $G_n$ will be equal, but this is not necessary for the construction. In 
\cite{Ballesteros}, a construction was given in which each $X_n$ equals the ordered set of rational numbers
under the group of all order-preserving permutations. This was specifically constructed to provide an example of 
a dense rigid chain (meaning it has no non-trivial order-automorphisms) admitting a non-trivial order-preserving 
surjection. In this case, definable closure is trivial (i.e. $dcl(A) = A$).

The `simplest' case is where each $G_n$ is the full symmetric group. All the individual successor sets are 
strictly amorphous (which means that all partitions of the set into infinitely many subsets have just finitely many 
non-singletons). Again definable closure is trivial.

In the cases so far described, the levels are all weakly Dedekind-finite, and one's hope that more complicated objects 
can be somehow described in terms of simpler constituents is realized. We can however give further examples to show 
that this doesn't always happen. Returning to themes given earlier in the paper, each successor set can be the union
of an $\omega$-sequence of pairs. The definable closure of a finite set is just the union of the pairs that it
intersects. If instead of an $\omega$-sequence, we just consider the {\em set} of pairs, which amounts to allowing
the group to permute the pairs (another wreath product) then we instead revert to the amorphous case (this time 
the successor sets are bounded amorphous sets of gauge 2, in the terminology of \cite{Truss3}). In both these
cases, definable closure is non-trivial.

If we take all successor sets to be a weakly 2-transitive tree, then we obtain many more examples in which these sets 
also lie in $\Delta_5 \setminus \Delta_4$. Finally, if we `recycle' one of our trees having $\omega$ levels as itself 
forming the levels of a new such tree, then we obtain a 2-step example, where the levels lie in $\Delta \setminus \Delta_5$.

We conclude this section by posing questions arising. In the first place, can one prove the analogues of 
Lemma \ref{4.1} and Theorem \ref{4.2} for general weakly Dedekind finite level sets? And can one find an example of a set
lying in $\Delta \setminus \Delta_5$ which cannot be written as a tree with $\omega$ levels and `simpler'
successor sets (that is, even applying the final method mentioned iteratively)?

\end{document}